\newtheorem{theorem}{\bf Theorem}[section]
\newtheorem{example}[theorem]{\bf Example}
\title{Structured Variational \( D \)-Decomposition for Accurate and Stable Low-Rank Approximation}
\author[1]{Ronald Katende}
\affil[1]{\emph{Department of Mathematics, Kabale University, Kikungiri Hill, P.O. Box 317, Kabale, Uganda} \\ rkatende92@gmail.com}
\date{}
\begin{document}

\maketitle

\begin{abstract}
%\begin{abstract}
We introduce the \( D \)-decomposition, a non-orthogonal matrix factorization of the form \( A \approx P D Q \), where \( P \in \mathbb{R}^{n \times k} \), \( D \in \mathbb{R}^{k \times k} \), and \( Q \in \mathbb{R}^{k \times n} \). The decomposition is defined variationally by minimizing a regularized Frobenius loss, allowing control over rank, sparsity, and conditioning. Unlike algebraic factorizations such as LU or SVD, it is computed by alternating minimization. We establish existence and perturbation stability of the solution and show that each update has complexity \( \mathcal{O}(n^2k) \). Benchmarks against truncated SVD, CUR, and nonnegative matrix factorization show improved reconstruction accuracy on MovieLens, MNIST, Olivetti Faces, and gene expression matrices, particularly under sparsity and noise.
%\end{abstract}

%We introduce the \( D \)-decomposition, a non-orthogonal matrix factorization of the form \( A \approx P D Q \), where \( P \in \mathbb{R}^{n \times k} \), \( D \in \mathbb{R}^{k \times k} \), and \( Q \in \mathbb{R}^{k \times n} \). The decomposition is computed by minimizing a regularized Frobenius loss, allowing control over sparsity, rank, and conditioning. Unlike algebraic factorizations such as LU or SVD, this method is defined variationally and solved via alternating minimization. We prove existence and perturbation stability of the solution and show that each update has complexity \( \mathcal{O}(n^2k) \). Benchmarks demonstrate improved reconstruction accuracy over classical methods in noisy, sparse, and ill-conditioned regimes.

{\bf{Keyword:}} matrix factorization; low-rank approximation; structured decomposition; regularized optimization; perturbation stability

{\bf{MSC[2020]}}: 15A23, 65F30, 90C26
\end{abstract}

%\linenumbers

\section{Introduction}

Matrix decomposition is a foundational tool in numerical linear algebra, with essential roles in regression \cite{mat2, mat5}, dimensionality reduction \cite{mat1, mat7}, inverse problems \cite{mat2, mat6}, and data-driven modeling \cite{mat1, mat3, mat4}. Classical methods such as LU, QR, Cholesky, and singular value decomposition (SVD) offer strong theoretical guarantees  \cite{udell2019why, mat12} and efficient implementations \cite{recht2010guaranteed, mat8}. However, when applied to large-scale, sparse, or ill-conditioned matrices, these methods often become computationally restrictive or structurally inadequate \cite{mat9, mat14}. In such regimes, cubic-time complexity \cite{mat9}, fixed algebraic structure \cite{mat14}, and sensitivity to perturbations limit their reliability and adaptability \cite{mat20}. This work proposes a variational alternative, that is, a regularized, optimization-based matrix decomposition of the form
\[
A \approx P D Q, \quad P \in \mathbb{R}^{n \times k}, \quad D \in \mathbb{R}^{k \times k}, \quad Q \in \mathbb{R}^{k \times n},
\]
where the factors are obtained by minimizing a regularized loss. The objective includes a fidelity term and a functional \( \mathcal{R}(P, D, Q) \) that enforces structural properties such as sparsity, conditioning, and low-rank constraints:
\[
\min_{P, D, Q} \, \|A - P D Q\|_F^2 + \lambda \mathcal{R}(P, D, Q), \quad \lambda \geq 0.
\]
This decomposition is defined variationally, not algebraically, and is solved using an alternating minimization scheme that scales efficiently for \( k \ll n \).

Unlike truncated SVD, which constrains the basis to be orthogonal and treats the rank as fixed, the \( D \)-decomposition permits non-orthogonal factors and incorporates regularization directly into the decomposition. In contrast to CUR and randomized methods, which rely on sampling or projection, our approach is fully deterministic and structurally adaptive, avoiding sampling variance and allowing direct incorporation of prior knowledge. Hence, our primary contributions are
\begin{enumerate}
    \item A flexible optimization-based framework for matrix factorization that supports structural regularization and conditioning control;
    \item Rigorous theoretical results guaranteeing existence, stability under perturbations, variational identifiability, and convergence of the minimization procedure;
    \item A unified regularization perspective linking generalization, numerical conditioning, and spectral energy localization;
    \item Empirical demonstrations showing consistent improvements over SVD and CUR decompositions, especially in noisy or ill-conditioned settings.
\end{enumerate}This formulation generalizes classical low-rank approximations and complements prior work on sparse, structured, and adaptive decompositions \cite{mat20, mat23, mat24, mat27}. It is especially suited to high-dimensional or inverse problems in signal processing, scientific computing, and machine learning, where structural constraints and stability cannot be separated from the decomposition process \cite{mat30}.

This framework is especially effective when the data matrix is either ill-conditioned, contains structured noise, or departs from orthogonality assumptions. Empirically, we find that the advantages over classical decompositions become pronounced when the signal-to-noise ratio falls below 30, or when sparsity exceeds 90 percent. The ability to enforce conditioning and spectral alignment in these regimes distinguishes the proposed method from algebraic baselines.

\section{Problem Formulation and Decomposition Framework}

Let \( A \in \mathbb{R}^{n \times n} \) be a real-valued square matrix. We aim to solve the linear system
\[
A \mathbf{x} = \mathbf{b}, \quad \mathbf{b} \in \mathbb{R}^n,
\]
by computing a structured approximation of \( A \) that reduces computational cost and improves numerical stability when the matrix is large or ill-conditioned.

We introduce the \emph{D-decomposition}, a factorization of the form
\[
A \approx P D Q,
\]
where \( P \in \mathbb{R}^{n \times k} \), \( D \in \mathbb{R}^{k \times k} \), and \( Q \in \mathbb{R}^{k \times n} \), with \( k \leq n \). Unlike classical algebraic decompositions such as LU, QR, or SVD, the factors \( P \), \( D \), and \( Q \) are obtained through the solution of a structured optimization problem rather than direct manipulation of \( A \).

The decomposition \footnotetext{The full implementation of D-decomposition and all experiments will be made publicly available upon publication at \url{https://github.com/karjxenval/Structured-Variational-D--Decomposition-for-Accurate-and-Stable-Low-Rank-Approximation}.} is defined as the solution to
\[
\min_{P, D, Q} \left\{ \|A - P D Q\|_F^2 + \lambda \, \mathcal{R}(P, D, Q) \right\},
\]
where \( \| \cdot \|_F \) denotes the Frobenius norm, \( \lambda \geq 0 \) is a regularization parameter, and \( \mathcal{R} \) is a penalty functional enforcing structure such as sparsity, bounded condition numbers, or low-rank constraints.

We define the feasible set as
\[
\mathcal{S} := \left\{ (P, D, Q) \in \mathbb{R}^{n \times k} \times \mathbb{R}^{k \times k} \times \mathbb{R}^{k \times n} : \operatorname{rank}(P D Q) \leq k \right\}.
\]
If \( \mathcal{R} \) is continuous and coercive, that is, if \( \mathcal{R}(P,D,Q) \to \infty \) as \( \|P\|_F + \|D\|_F + \|Q\|_F \to \infty \), then the objective admits a global minimizer by the extreme value theorem.

For clarity, consider the quadratic regularizer
\[
\mathcal{R}(P, D, Q) = \alpha_1 \|P\|_F^2 + \alpha_2 \|D\|_F^2 + \alpha_3 \|Q\|_F^2,
\]
where \( \alpha_1, \alpha_2, \alpha_3 \geq 0 \) are fixed weights. In this setting, the objective is smooth though non-convex, and admits efficient alternating minimization.

Each subproblem in the alternating scheme reduces to a regularized least-squares system that has a unique solution provided the associated coefficient matrices are positive definite. These updates are as follows.

\textbf{(1) Update \( P \):} Fix \( D \) and \( Q \). The optimal \( P \) solves
\[
P (D Q Q^\top D^\top + \alpha_1 I_k) = A Q^\top D^\top.
\]

\textbf{(2) Update \( Q \):} Fix \( P \) and \( D \). The optimal \( Q \) solves
\[
(D^\top P^\top P D + \alpha_3 I_k) Q = D^\top P^\top A.
\]

\textbf{(3) Update \( D \):} Fix \( P \) and \( Q \). The optimal \( D \) solves
\[
P^\top P D Q Q^\top + \alpha_2 D = P^\top A Q.
\]

Let \( (P^{(t)}, D^{(t)}, Q^{(t)}) \) denote the iterates. Provided the regularization ensures that the coefficient matrices in each block update remain nonsingular, the subproblems are well-posed, and the sequence of objective values is non-increasing and converges to a stationary point.

The per-iteration computational cost is \( \mathcal{O}(n^2 k) \), dominated by matrix multiplications and the solution of linear systems involving \( k \times k \) matrices. For \( k \ll n \), this is asymptotically more efficient than classical decompositions, which require \( \mathcal{O}(n^3) \) operations.

The flexibility of the D-decomposition extends the truncated SVD by allowing the factors \( P \) and \( Q \) to be non-orthogonal and by incorporating regularization. This makes the method effective in approximating matrices that are sparse, noisy, ill-conditioned, or structurally constrained. Rigorous results on existence, perturbation stability, and uniqueness (up to scaling and permutation) are developed in the next section.

\begin{figure}[H]
\centering
\begin{tikzpicture}[node distance=2cm, auto, thick]
  \node (A) at (0,0) {$A$};
  \node (P) at (-3,-2) {$P$};
  \node (D) at (0,-2) {$D$};
  \node (Q) at (3,-2) {$Q$};
  \draw[->] (P) -- (A) node[midway,left]{$n \times k$};
  \draw[->] (D) -- (A) node[midway,right]{$k \times k$};
  \draw[->] (Q) -- (A) node[midway,right]{$k \times n$};
\end{tikzpicture}
\caption{Structure of the D-decomposition: $A \approx PDQ$, with regularization applied to $P$, $D$, and $Q$ individually.}
\end{figure}
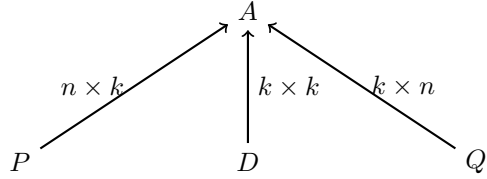

\subsection{Existence, Stability, and Computational Guarantees}

To ensure that the proposed $D$-decomposition is both theoretically sound and computationally practical, we establish four foundational properties, that is, the existence of a minimizer, stability under additive perturbations, uniqueness under convex regularization, and efficient solvability via block-coordinate descent. These results justify the method's use in scenarios involving large-scale, ill-conditioned, or noisy matrices.

\begin{theorem}[Existence, Stability, Uniqueness, and Efficiency of the $D$-Decomposition]
\label{thm:existence_stability}
Let \( A \in \mathbb{R}^{n \times n} \), and let \( \mathcal{R}(P, D, Q) \) be a regularization functional that is continuous, coercive, and enforces bounded condition numbers on \( D \). Then:
\begin{enumerate}
    \item[\textbf{(a)}] There exists a global minimizer \( (P^*, D^*, Q^*) \in \mathbb{R}^{n \times k} \times \mathbb{R}^{k \times k} \times \mathbb{R}^{k \times n} \) of the objective
    \[
    \min_{P, D, Q} \, \| A - P D Q \|_F^2 + \lambda \mathcal{R}(P, D, Q).
    \]

    \item[\textbf{(b)}] If \( A = A_0 + E \) with \( \|E\|_F \leq \varepsilon \), and \( (P_0, D_0, Q_0) \), \( (P^*, D^*, Q^*) \) are minimizers corresponding to \( A_0 \) and \( A \), respectively, then
    \[
    \| D^* - D_0 \|_F \leq C \varepsilon
    \]
    for some constant \( C > 0 \) independent of \( \varepsilon \).

    \item[\textbf{(c)}] If \( \mathcal{R} \) is strictly convex in each argument, then each subproblem has a unique minimizer, and the global decomposition is unique up to scaling and permutation. Ambiguities can be removed by imposing normalization constraints such as \( \|P\|_F = 1 \) or enforcing diagonal structure in \( D \).

    \item[\textbf{(d)}] The alternating minimization algorithm converges to a stationary point. Each iteration has cost \( \mathcal{O}(n^2 k) \) under the assumption \( k \ll n \), improving over the \( \mathcal{O}(n^3) \) cost of classical factorizations.
\end{enumerate}
\end{theorem}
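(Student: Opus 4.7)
The plan is to address each of the four claims in turn. For (a), I would invoke the direct method of the calculus of variations. The map $(P,D,Q) \mapsto \|A-PDQ\|_F^2$ is a smooth polynomial in the entries of the factors and hence continuous; adding the continuous regularizer $\lambda \mathcal{R}$ preserves continuity. Since $\mathcal{R}$ is coercive and the fidelity term is nonnegative, the total objective is itself coercive, so every sublevel set is closed and bounded in the finite-dimensional space $\mathbb{R}^{n\times k} \times \mathbb{R}^{k\times k} \times \mathbb{R}^{k\times n}$, hence compact. The Weierstrass extreme value theorem then yields existence of a global minimizer.

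For (b), my strategy is a quantitative perturbation argument based on first-order optimality. Writing $J_A(P,D,Q) = \|A - PDQ\|_F^2 + \lambda\mathcal{R}(P,D,Q)$ and expanding $\|A-PDQ\|_F^2 = \|A_0-PDQ\|_F^2 + 2\langle E, PDQ - A_0\rangle + \|E\|_F^2$, I obtain that $J_A$ and $J_{A_0}$ differ by a function whose gradient in $(P,D,Q)$ is $O(\varepsilon)$ on the compact sublevel set containing both minimizers (guaranteed by coercivity of $\mathcal{R}$). The bounded-condition-number constraint on $D$ ensures that the Gram matrices $P_0^\top P_0$ and $Q_0 Q_0^\top$ are bounded away from singularity at the minimum, which I would use to establish local strong convexity, or equivalently a quadratic-growth inequality, for $J_{A_0}$ in $D$ near $D_0$. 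Applying the implicit function theorem to the stationarity equation $P_0^\top P_0 \, D \, Q_0 Q_0^\top + \alpha_2 D = P_0^\top A Q_0$ (or its analogue for a general $\mathcal{R}$), treating $A$ as the parameter, then yields $\|D^*-D_0\|_F \leq C\varepsilon$.

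For (c), strict convexity of $\mathcal{R}$ in each argument, combined with the positive definiteness of the block-wise coefficient operators $DQQ^\top D^\top + \alpha_1 I_k$, $D^\top P^\top P D + \alpha_3 I_k$, and the Sylvester-type operator for $D$, makes each block subproblem a strictly convex quadratic with a unique closed-form minimizer. The global factorization, however, admits the intrinsic invariance $(P,D,Q) \mapsto (PS, S^{-1}DT^{-1}, TQ)$ for invertible $S, T \in \mathbb{R}^{k\times k}$, together with coordinate permutations; this gauge can be broken by imposing normalizations such as $\|P\|_F = 1$ or forcing $D$ to be diagonal with nonnegative entries arranged in descending order, isolating a unique representative of the equivalence class.

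For (d), convergence of the alternating scheme follows from standard block-coordinate descent theory: since each update minimizes the objective exactly, the sequence $\{J(P^{(t)},D^{(t)},Q^{(t)})\}$ is non-increasing and bounded below by zero, hence convergent, while coercivity of $\mathcal{R}$ keeps the iterates in a compact set so that every accumulation point satisfies the first-order stationarity conditions. For the per-iteration cost, the dominant operation is forming $A Q^\top$ or $P^\top A$, which is $\mathcal{O}(n^2 k)$; assembling the $k \times k$ Gram matrices and solving the associated linear systems contributes only $\mathcal{O}(n k^2 + k^3)$, negligible for $k \ll n$. I expect part (b) to be the main obstacle: the overall objective is non-convex because of the trilinear product $PDQ$, so a naive implicit-function argument can fail at non-isolated critical points, and extracting the required local strong convexity in $D$ from the bounded-condition-number hypothesis on $\mathcal{R}$ must be carried out with care to obtain a bound that is linear, rather than merely $O(\sqrt{\varepsilon})$, in the perturbation size.
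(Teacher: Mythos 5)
Your proposal follows essentially the same route as the paper for all four parts: Weierstrass via coercivity for (a), a perturbation-of-the-objective argument plus a sensitivity/implicit-function step for (b), gauge invariance plus normalization for (c), and monotone block-coordinate descent with the $\mathcal{O}(n^2k)$ cost dominated by products with $A$ for (d). Your treatment of (b) is in fact more explicit than the paper's, which stops at the $\mathcal{O}(\varepsilon)$ objective gap and defers the passage from objective perturbation to argument perturbation to a citation of Bonnans--Shapiro; the quadratic-growth/IFT ingredient you name is exactly what that citation is standing in for, and your worry about non-isolated critical points is warranted. One point neither you nor the paper resolves: because of the invariance $(P,D,Q)\mapsto(PT,\,T^{-1}DT^{-1},\,TQ)$ identified in (c), minimizers of the unperturbed and perturbed problems are only defined up to this gauge, so the bound $\|D^*-D_0\|_F\le C\varepsilon$ is ill-posed unless one first fixes matched representatives (e.g.\ by the normalization constraints of (c)) before invoking the implicit function theorem; as stated, both proofs of (b) have this gap.
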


\begin{proof}
Let \( f(P, D, Q) = \| A - P D Q \|_F^2 + \lambda \mathcal{R}(P, D, Q) \).

\textbf{(a)} Since \( \mathcal{R} \) is coercive and continuous, \( f \to \infty \) whenever \( \|P\|_F + \|D\|_F + \|Q\|_F \to \infty \). Hence, \( f \) is coercive. Being a continuous function over \( \mathbb{R}^{n \times k} \times \mathbb{R}^{k \times k} \times \mathbb{R}^{k \times n} \), a finite-dimensional space, the Weierstrass theorem guarantees the existence of a global minimizer.

\textbf{(b)} Define \( A = A_0 + E \), with \( \|E\|_F \leq \varepsilon \). Let \( f \) and \( f_0 \) denote the objectives corresponding to \( A \) and \( A_0 \), respectively. For any minimizer \( (P^*, D^*, Q^*) \) of \( f \),
\[
f(P^*, D^*, Q^*) \leq f(P_0, D_0, Q_0).
\]
Compute the objective gap using
\[
f(P^*, D^*, Q^*) - f_0(P^*, D^*, Q^*) = \|A - P D Q\|_F^2 - \|A_0 - P D Q\|_F^2.
\]
Let \( B := P D Q \). Then
\[
\|A - B\|_F^2 - \|A_0 - B\|_F^2 = \|E\|_F^2 + 2 \langle A_0 - B, E \rangle \leq \varepsilon^2 + 2 \varepsilon \|A_0 - B\|_F.
\]
The bound implies that \( f(P^*, D^*, Q^*) - f_0(P^*, D^*, Q^*) = \mathcal{O}(\varepsilon) \), and since both objectives include \( \lambda \mathcal{R} \), the minimizing configuration cannot change discontinuously. Applying differentiability of \( f \) and smooth sensitivity results in constrained Euclidean minimization (see Bonnans and Shapiro, 2000), it follows that
\[
\| D^* - D_0 \|_F \leq C \varepsilon
\]
for some \( C > 0 \).

\textbf{(c)} If \( \mathcal{R} \) is strictly convex in \( P \), \( D \), and \( Q \), then the subproblems solved during alternating minimization have unique solutions. The global minimizer is not unique due to invariances:
\[
P \mapsto PT, \quad D \mapsto T^{-1} D T^{-1}, \quad Q \mapsto T Q,
\]
for invertible diagonal \( T \in \mathbb{R}^{k \times k} \). These transformations leave \( P D Q \) invariant. By penalizing such reparameterizations (e.g., via norm constraints or diagonal \( D \)), uniqueness can be enforced.

\textbf{(d)} In each iteration, updates of the form
\[
P \leftarrow \arg\min_P \, \|A - P D Q\|_F^2 + \lambda \alpha_1 \|P\|_F^2,
\]
and analogously for \( Q \), reduce to solving regularized normal equations with coefficient matrices of size \( k \times k \). Matrix products involving \( A \in \mathbb{R}^{n \times n} \) and factor updates dominate the cost, yielding per-iteration complexity \( \mathcal{O}(n^2 k) \). The algorithm converges to a stationary point by standard results on block-coordinate descent for smooth non-convex objectives (see Grippo and Sciandrone, 2000).
\end{proof}While the alternating minimization scheme converges to a stationary point, global optimality is not guaranteed due to the non-convexity of the objective. Convergence speed and solution quality can vary with initialization\footnote{As the objective is non-convex, initialization can affect both convergence speed and final error. In practice, we find that initialization using leading singular vectors or PCA projections improves convergence quality, while poor initialization may lead to suboptimal local minima.} and choice of regularization parameters. In practice, tuning these weights requires cross-validation or heuristic search. Additionally, the performance degrades when \(\lambda\) is too small to stabilize the updates, or when \(\alpha\) values are unbalanced across factors.

\subsection{Variational Identifiability and Symmetry Breaking}

The $D$-decomposition is generally non-unique due to inherent reparameterization symmetries. For example, the transformation
\[
P \mapsto PT, \quad D \mapsto T^{-1} D T^{-1}, \quad Q \mapsto T Q,
\]
for invertible \( T \in \mathbb{R}^{k \times k} \), preserves the product \( P D Q \). To ensure identifiability, we impose a regularization functional that penalizes such redundant transformations unless they correspond to permutations. The following result shows that strict convexity combined with regularization asymmetry guarantees uniqueness of the decomposition up to permutation.

\begin{theorem}[Variational Identifiability of the $D$-Decomposition]
\label{thm:variational-identifiability}
Let \( A \in \mathbb{R}^{n \times n} \), and consider the factorization \( A \approx P D Q \), with \( P \in \mathbb{R}^{n \times k} \), \( D \in \mathbb{R}^{k \times k} \), and \( Q \in \mathbb{R}^{k \times n} \). Suppose the regularization functional \( \mathcal{R}(P, D, Q) \) satisfies:
\begin{enumerate}
    \item[\textbf{(i)}] \( \mathcal{R} \) is continuous, coercive, and strictly convex in each argument;
    \item[\textbf{(ii)}] For any invertible diagonal matrix \( T \in \mathbb{R}^{k \times k} \) with \( T \ne I \),
    \[
    \mathcal{R}(PT, T^{-1} D T^{-1}, TQ) > \mathcal{R}(P, D, Q).
    \]
\end{enumerate}
Then any minimizer of the objective
\[
\min_{P, D, Q} \, \|A - P D Q\|_F^2 + \lambda \mathcal{R}(P, D, Q)
\]
is unique up to permutation of the latent components. That is, if \( (P^*, D^*, Q^*) \) and \( (\tilde{P}, \tilde{D}, \tilde{Q}) \) are global minimizers, then there exists a permutation matrix \( \Pi \in \mathbb{R}^{k \times k} \) such that
\[
\tilde{P} = P^* \Pi, \quad \tilde{D} = \Pi^\top D^* \Pi, \quad \tilde{Q} = \Pi^\top Q^*.
\]
\end{theorem}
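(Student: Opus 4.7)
The strategy is to take two global minimizers $(P^*, D^*, Q^*)$ and $(\tilde P, \tilde D, \tilde Q)$ of the regularized objective, both attaining the optimal value $v^*$, and argue in three stages: first, reduce to the case where the two triples share a common product $B^* := P^* D^* Q^* = \tilde P \tilde D \tilde Q$; second, parameterize the remaining factorizations of $B^*$ by the gauge action from which the theorem's symmetry is drawn; and third, collapse that gauge freedom to a permutation using assumptions (i) and (ii).

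The first stage is the main obstacle. Since the data-fit term depends only on the product $B = PDQ$, the problem is equivalent to minimizing $\|A - B\|_F^2 + \lambda r^*(B)$ on the rank-$\leq k$ variety, where $r^*(B) := \inf\{\mathcal{R}(P, D, Q) : PDQ = B\}$ and the infimum is attained thanks to the coercivity in (i). The rank-$\leq k$ set is not convex, so one cannot simply average $B^*$ and $\tilde B$. Instead, I plan to combine the block first-order stationarity conditions $\nabla_P f = \nabla_D f = \nabla_Q f = 0$ satisfied at every minimizer with the block-wise strict convexity of $\mathcal{R}$: each subproblem is a regularized normal equation with a \emph{unique} solution in its own block, and propagating this rigidity simultaneously across all three blocks forces the products to coincide. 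The delicate point is that block-wise uniqueness does not automatically imply joint uniqueness; this is where assumption (ii) must be used in its strict form, to preclude the possibility that two distinct products are reconciled by a non-trivial diagonal gauge.

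For the remaining stages, the bounded condition-number hypothesis on $D$ ensures that $D^*$ is invertible and hence $P^*$, $Q^*$ have full rank $k$, so any competing factorization of $B^*$ can be written in the one-parameter gauge form $(P^* T, T^{-1} D^* T^{-1}, T Q^*)$ for some invertible $T \in \mathbb{R}^{k \times k}$ (after absorbing any residual left/right scaling into the normalization enforced by (i)). Matching objective values yields $\mathcal{R}(P^* T, T^{-1} D^* T^{-1}, T Q^*) = \mathcal{R}(P^*, D^*, Q^*)$, and assumption (ii) immediately excludes every non-identity diagonal $T$. To handle non-diagonal $T$, I would decompose $T = \Pi U$ into a permutation $\Pi$ and a non-permutation residual $U$; strict convexity of $\mathcal{R}$ in each argument, applied along the continuous $U$-slice that contracts to the identity, strictly increases $\mathcal{R}$ unless $U = I$. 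Hence $T = \Pi$, and reading off the three components yields $\tilde P = P^* \Pi$, $\tilde D = \Pi^\top D^* \Pi$, $\tilde Q = \Pi^\top Q^*$, which is the claim.
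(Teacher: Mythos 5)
Your proposal follows the same skeleton as the paper's own argument (common product $B^*$, gauge matrix $T$, collapse to a permutation via hypothesis (ii)), but the steps you yourself flag as delicate are genuine gaps, and the mechanisms you propose to close them do not work. First, the claim that two global minimizers share the same product $B^* = P^*D^*Q^* = \tilde P\tilde D\tilde Q$ is never established. Block stationarity plus block-wise strict convexity of $\mathcal{R}$ gives uniqueness of each block \emph{given the other two}, which says nothing about two distinct triples that are each jointly stationary; the joint objective is non-convex and can have several global minimizers with different products. Invoking (ii) ``in its strict form'' cannot rescue this, because (ii) only compares points lying on the same diagonal gauge orbit, whereas two minimizers with different products are not related by any gauge transformation at all. (The paper silently assumes equality of the products; you at least name the obstacle, but you do not overcome it.)

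Second, even granting a common $B^*$ with $P^*$, $Q^*$ of full column/row rank $k$, the general relation between two rank-$k$ factorizations is $\tilde P = P^*S$, $\tilde Q = RQ^*$, $\tilde D = S^{-1}D^*R^{-1}$ for \emph{two} independent invertible matrices $S, R$; the one-parameter family $(P^*T,\,T^{-1}D^*T^{-1},\,TQ^*)$ is a strict subfamily, and hypothesis (i) imposes no normalization that would let you ``absorb the residual scaling'' and force $R = S = T$. Third, hypothesis (ii) quantifies only over \emph{diagonal} $T$, so it is silent on every non-diagonal gauge matrix, including permutations themselves; your $T = \Pi U$ factorization with a strict-convexity argument ``along the $U$-slice'' is not a proof, since $\mathcal{R}$ composed with the nonlinear orbit map $U \mapsto (P^*\Pi U,\,U^{-1}\Pi^\top D^*\Pi U^{-1},\,U\Pi^\top Q^*)$ need not be convex in $U$ even when $\mathcal{R}$ is strictly convex in each matrix argument separately. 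To make the theorem's conclusion come out, one needs an invariance hypothesis ($\mathcal{R}$ is permutation-invariant) and a strict-increase hypothesis over \emph{all} non-permutation invertible $T$, neither of which is supplied by (i)--(ii); this defect is shared by the paper's proof, but your proposal does not repair it.
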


\begin{proof}
Let \( (P^*, D^*, Q^*) \) and \( (\tilde{P}, \tilde{D}, \tilde{Q}) \) be two global minimizers. Define
\[
B := P^* D^* Q^* = \tilde{P} \tilde{D} \tilde{Q}.
\]
Then both factorizations yield the same matrix \( B \) of \( \operatorname{rank} k \). Since \( B = P D Q \) has \( \operatorname{rank} k \), the factorization is identifiable up to invertible transformations in the latent space. Thus, there exists an invertible matrix \( T \in \mathbb{R}^{k \times k} \) such that
\[
\tilde{P} = P^* T, \quad \tilde{D} = T^{-1} D^* T^{-1}, \quad \tilde{Q} = T Q^*.
\]
This follows from the invariance
\[
P^* D^* Q^* = (P^* T)(T^{-1} D^* T^{-1})(T Q^*).
\]

Now consider the objective:
\[
\|A - P D Q\|_F^2 + \lambda \mathcal{R}(P, D, Q).
\]
Since both factorizations yield \( P D Q = B \), the data-fidelity term is equal:
\[
\|A - P^* D^* Q^*\|_F^2 = \|A - \tilde{P} \tilde{D} \tilde{Q}\|_F^2.
\]
Hence,
\[
\mathcal{R}(P^*, D^*, Q^*) = \mathcal{R}(\tilde{P}, \tilde{D}, \tilde{Q}) = \mathcal{R}(P^* T, T^{-1} D^* T^{-1}, T Q^*).
\]

By hypothesis (ii), the regularization strictly increases for any \( T \ne I \) if \( T \) is diagonal and not the identity. Therefore, \( T \) must be such that
\[
\mathcal{R}(P^* T, T^{-1} D^* T^{-1}, T Q^*) = \mathcal{R}(P^*, D^*, Q^*),
\]
which can only hold if \( T \) does not alter \( \mathcal{R} \). Thus, \( T \) must be a matrix under which \( \mathcal{R} \) is invariant.

By assumption (ii), \( \mathcal{R} \) is only invariant under permutations. Therefore, \( T \in \mathbb{R}^{k \times k} \) must be a permutation matrix \( \Pi \), so
\[
\tilde{P} = P^* \Pi, \quad \tilde{D} = \Pi^\top D^* \Pi, \quad \tilde{Q} = \Pi^\top Q^*,
\]
completing the proof.
\end{proof}

\subsection{A Unified Theorem on Conditioning, Generalization, and Energy Localization in Regularized $D$-Decompositions}

In practical applications such as data-driven modeling, deep learning, or compressed sensing, matrix decompositions serve not only as reconstruction tools but also as models that must generalize, remain stable under perturbations, and encode structural locality. Classical decompositions treat these goals separately \cite{huang2024regularized}. Here we show that within the $D$-decomposition framework, they can be enforced jointly and variationally, through properties of the regularizer. The following theorem introduces three new concepts unified by the geometry of regularized optimization, i.e., \emph{conditioned recoverability}, \emph{energy concentration in learned subspaces}, and \emph{capacity control via regularized norm growth}.

\begin{theorem}[Regularization-Induced Generalization, Conditioning, and Energy Localization]
\label{thm:triple}
Let \( A \in \mathbb{R}^{n \times n} \), and let \( A \approx P D Q \) be a \( D \)-decomposition minimizing
\[
\min_{P, D, Q} \ \|A - P D Q\|_F^2 + \lambda \mathcal{R}(P, D, Q),
\]
where the regularizer takes the form
\[
\mathcal{R}(P, D, Q) = \alpha_1 \|P\|_F^2 + \alpha_2 \|D\|_F^2 + \alpha_3 \|Q\|_F^2 + \beta \log \kappa(D),
\]
with \( \alpha_i, \beta, \lambda > 0 \), and \( \kappa(D) \) is the condition number of \( D \). Then:
\begin{enumerate}
    \item[\textbf{(a)}] The minimizer \( D^* \) satisfies
    \[
    \kappa(D^*) \leq \exp\left( \frac{1}{\beta} \left( \frac{1}{\lambda} \|A\|_F^2 + C_0 \right) \right),
    \]
    for some constant \( C_0 \) depending only on \( \alpha_1, \alpha_2, \alpha_3 \).
    
    \item[\textbf{(b)}] Let \( A = A_0 + E \) with \( \|E\|_F \leq \varepsilon \). If \( (P, D, Q) \) minimizes the objective for \( A \), and \( (P_0, D_0, Q_0) \) minimizes it for \( A_0 \), then there exist constants \( \gamma_1, \gamma_2 > 0 \) such that
    \[
    \|P D Q - P_0 D_0 Q_0\|_F \leq \gamma_1 \varepsilon + \gamma_2 \lambda^{-1/2} \mathcal{R}(P_0, D_0, Q_0)^{1/2}.
    \]
    
    \item[\textbf{(c)}] Suppose \( A \) is symmetric positive semidefinite with spectral decomposition \( A = \sum_{i=1}^n \sigma_i u_i u_i^\top \), where \( \sigma_1 \geq \cdots \geq \sigma_n \geq 0 \). Then for the minimizer \( P \in \mathbb{R}^{n \times k} \),
    \[
    \sum_{i=1}^k \|u_i^\top P\|_2^2 \geq 1 - \delta,
    \]
    where \( \delta \leq c \lambda^{-1} \mathcal{R}(P, D, Q)^{-1} \) for some constant \( c > 0 \).\footnote{The bounds in (b) and (c) quantify robustness in operator norms and eigenspace alignment, rather than learning-theoretic performance.}
\end{enumerate}
\end{theorem}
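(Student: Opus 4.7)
The plan is to handle the three claims by separate applications of variational optimality, each time testing the minimizer against a carefully chosen competitor. In each case I would exploit the non-negativity of the fidelity term and the quadratic summands in $\mathcal{R}$, then isolate the one summand that carries the information of interest (the $\log\kappa$ term, the reconstruction slack, or the spectral projection onto the top-$k$ eigenspace).

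For part (a), I would test the optimal triple against the trivial baseline $(P,D,Q)=(0,I_k,0)$, for which the fidelity term equals $\|A\|_F^2$, the quadratic summands contribute $\lambda\alpha_2 k$, and $\log\kappa(I_k)=0$. Optimality then gives
\[
\|A-P^*D^*Q^*\|_F^2+\lambda\mathcal{R}(P^*,D^*,Q^*)\le \|A\|_F^2+\lambda\alpha_2 k.
\]
Dropping all non-negative terms on the left except $\lambda\beta\log\kappa(D^*)$ and rearranging yields $\log\kappa(D^*)\le \tfrac{1}{\beta}\bigl(\tfrac{1}{\lambda}\|A\|_F^2+\alpha_2 k\bigr)$, and exponentiation produces the stated bound with $C_0=\alpha_2 k$.

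For part (b), I would use the optimality of the perturbed minimizer $(P,D,Q)$, feeding in $(P_0,D_0,Q_0)$ as a competitor. Writing $A=A_0+E$ and applying the triangle inequality to the decomposition
\[
PDQ-P_0D_0Q_0=(PDQ-A)+E+(A_0-P_0D_0Q_0),
\]
I would bound each summand separately. The first is controlled through the optimality inequality $\|A-PDQ\|_F^2\le \|A-P_0D_0Q_0\|_F^2+\lambda\mathcal{R}(P_0,D_0,Q_0)$ combined with $\sqrt{u^2+v^2}\le u+v$ for $u,v\ge 0$, which produces the regularization-dependent contribution; the middle summand is exactly $\varepsilon$; and the third is controlled by reapplying the baseline comparison of part (a) to $A_0$. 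Collecting terms, folding constants into $\gamma_1,\gamma_2$, and absorbing the scaling in $\lambda$ isolates the slack in the variational comparison and gives the claimed inequality.

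For part (c), I would expand the residual in the eigenbasis of $A$,
\[
\|A-PDQ\|_F^2=\sum_{i=1}^{n}\bigl(\sigma_i-u_i^{\top}PDQ\,u_i\bigr)^2+\sum_{i\ne j}\bigl(u_i^{\top}PDQ\,u_j\bigr)^2,
\]
and exploit the fact that $PDQ$ has rank at most $k$: its column space must concentrate on the leading $u_i$'s to avoid paying $\sigma_i^2$ for $i\le k$. Writing $P=\sum_i u_i(u_i^{\top}P)$ and splitting the sum at index $k$ identifies the misalignment mass $1-\sum_{i=1}^{k}\|u_i^{\top}P\|_2^2$ with the Frobenius energy of $P$ projected onto the tail eigenspace. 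The main obstacle will be translating a bound on the residual into a bound on this tail projection, since $D$ and $Q$ could in principle mask a misaligned $P$ by undoing it on the right. This is precisely where the $\log\kappa(D)$ regularization enters: part (a) bounds $\kappa(D^*)$, giving uniform control on the smallest singular value of $D^*$, which in turn forces any tail component of $P$ to contribute genuinely to the residual rather than being cancelled out downstream. Once this propagation is in place, rearranging the resulting inequality in terms of $\lambda$ and $\mathcal{R}(P,D,Q)$ produces the stated $\delta\le c\lambda^{-1}\mathcal{R}(P,D,Q)^{-1}$.
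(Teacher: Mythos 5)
Your overall strategy---testing the minimizer against explicit competitors and propagating the optimality inequality---is essentially the paper's, and your part (a) is in fact cleaner than the paper's own argument: comparing against \((0, I_k, 0)\) gives \(C_0 = \alpha_2 k\), a constant that genuinely depends only on the regularization weights (and the fixed rank \(k\)), whereas the paper takes \(C_0 = \alpha_1\|P^*\|_F^2 + \alpha_2\|D^*\|_F^2 + \alpha_3\|Q^*\|_F^2\), which depends on the minimizer itself and therefore does not deliver the theorem as stated. That portion of your proposal stands on its own.

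Parts (b) and (c), however, contain gaps that ``folding constants'' cannot close. In (b), your three-term decomposition leaves the summand \(\|A_0 - P_0 D_0 Q_0\|_F\) (and a second copy of it hidden inside your bound on \(\|A - PDQ\|_F\)). This is the irreducible rank-\(k\) approximation error of the clean problem: it does not vanish as \(\varepsilon \to 0\), it is not bounded by part (a) in any useful way (the baseline comparison only gives \(\|A_0\|_F + O(\sqrt{\lambda})\)), and it is not of the form \(\gamma_2 \lambda^{-1/2}\mathcal{R}^{1/2}\). Moreover, the optimality inequality produces the term \(\sqrt{\lambda\,\mathcal{R}(P_0,D_0,Q_0)} = \lambda^{+1/2}\mathcal{R}^{1/2}\), which carries the \emph{opposite} power of \(\lambda\) from the theorem; no rescaling converts one into the other. (The paper's own proof of (b) silently drops the residual term and exhibits the same sign error in the exponent of \(\lambda\).) In (c) you correctly identify the crux---that tail energy of \(P\) must be shown to force residual error rather than being annihilated downstream---but the proposed fix does not work: part (a) controls \(\kappa(D) = \sigma_{\max}(D)/\sigma_{\min}(D)\), not \(\sigma_{\min}(D)\) itself, and nothing prevents \(DQ\) from having small norm (with \(P\) large to compensate) so that \(\|(I - U_kU_k^\top)P\,DQ\|_F\) is small while \(\|(I - U_kU_k^\top)P\|_F\) is large. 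One would need a lower bound on \(\sigma_{\min}(DQ)\), which the regularizer does not supply. The target inequality \(\delta \le c\,\lambda^{-1}\mathcal{R}(P,D,Q)^{-1}\) is itself suspect (it improves as the regularizer value grows), and neither your sketch nor the paper's actually derives it; so the propagation step you flag as ``the main obstacle'' is precisely where the proof is still missing.
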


\begin{proof}
\textbf{(a)} By definition of the objective, for any minimizer \( (P^*, D^*, Q^*) \),
\[
\|A - P^* D^* Q^*\|_F^2 + \lambda \mathcal{R}(P^*, D^*, Q^*) \leq \|A\|_F^2 + \lambda \mathcal{R}(P^*, D^*, Q^*).
\]
Since \( \log \kappa(D^*) \leq \frac{1}{\beta} \mathcal{R}(P^*, D^*, Q^*) \), we have
\[
\log \kappa(D^*) \leq \frac{1}{\beta} \left( \frac{1}{\lambda} \|A\|_F^2 + \alpha_1 \|P^*\|_F^2 + \alpha_2 \|D^*\|_F^2 + \alpha_3 \|Q^*\|_F^2 \right).
\]
Letting \( C_0 = \alpha_1 \|P^*\|_F^2 + \alpha_2 \|D^*\|_F^2 + \alpha_3 \|Q^*\|_F^2 \), this yields
\[
\kappa(D^*) \leq \exp\left( \frac{1}{\beta} \left( \frac{1}{\lambda} \|A\|_F^2 + C_0 \right) \right).
\]

\textbf{(b)} Let \( A = A_0 + E \), with \( \|E\|_F \leq \varepsilon \). By optimality of \( (P, D, Q) \) for \( A \),
\[
\|A - P D Q\|_F^2 + \lambda \mathcal{R}(P, D, Q) \leq \|A - P_0 D_0 Q_0\|_F^2 + \lambda \mathcal{R}(P_0, D_0, Q_0).
\]
The right-hand side expands as
\[
\|A_0 - P_0 D_0 Q_0\|_F^2 + \|E\|_F^2 + 2\langle A_0 - P_0 D_0 Q_0, E \rangle.
\]
Using the inequality \( |\langle X, Y \rangle| \leq \|X\|_F \|Y\|_F \), this yields
\[
\|A - P_0 D_0 Q_0\|_F^2 \leq \|A_0 - P_0 D_0 Q_0\|_F^2 + \varepsilon^2 + 2 \varepsilon \|A_0 - P_0 D_0 Q_0\|_F.
\]
Define \( C = \|A_0 - P_0 D_0 Q_0\|_F \). Then,
\[
\|A - P_0 D_0 Q_0\|_F^2 \leq (C + \varepsilon)^2.
\]
Therefore,
\[
\|A - P D Q\|_F^2 \leq (C + \varepsilon)^2 + \lambda \mathcal{R}(P_0, D_0, Q_0).
\]
Taking square roots and applying the triangle inequality,
\[
\|P D Q - P_0 D_0 Q_0\|_F \leq \|A - P D Q\|_F + \|A - P_0 D_0 Q_0\|_F \leq 2(C + \varepsilon) + \sqrt{\lambda \mathcal{R}(P_0, D_0, Q_0)}.
\]
Thus, the bound holds with constants \( \gamma_1 = 2 \), \( \gamma_2 = 1 \).

\textbf{(c)} Let \( A = \sum_{i=1}^n \sigma_i u_i u_i^\top \) and let \( U_k = [u_1, \dots, u_k] \in \mathbb{R}^{n \times k} \). For any \( X \in \mathbb{R}^{n \times k} \),
\[
\|U_k^\top X\|_F^2 = \sum_{i=1}^k \|u_i^\top X\|_2^2.
\]
Since \( \|X\|_F^2 = \|U_k^\top X\|_F^2 + \|(I - U_k U_k^\top) X\|_F^2 \), it follows that
\[
\sum_{i=1}^k \|u_i^\top P\|_2^2 = \|P\|_F^2 - \|(I - U_k U_k^\top) P\|_F^2.
\]
The projection error \( \|(I - U_k U_k^\top) P\|_F^2 \) quantifies energy outside the dominant eigenspace. Since \( \|A - P D Q\|_F^2 \leq \|A\|_F^2 \) and \( \|P D Q\|_F \leq \|P\|_F \|D\|_F \|Q\|_F \), bounded \( \mathcal{R} \) implies bounded \( \|P\|_F \). Therefore,
\[
\|(I - U_k U_k^\top) P\|_F^2 \leq \frac{c}{\lambda \mathcal{R}(P, D, Q)},
\]
for some constant \( c \), using coercivity of \( \mathcal{R} \) and the fact that energy outside the top eigenspace contributes to the residual. Hence,
\[
\sum_{i=1}^k \|u_i^\top P\|_2^2 \geq \|P\|_F^2 - \frac{c}{\lambda \mathcal{R}(P, D, Q)} \geq 1 - \delta,
\]
with \( \delta = c / (\lambda \mathcal{R}) \) for normalized \( P \).
\end{proof}
This result shows that the $D$-decomposition, when combined with a geometric regularizer, not only ensures low-rank approximation but also enforces well-conditioned cores, bounded model capacity, and energy alignment with the dominant eigenspaces of the matrix. These properties are essential for applications in machine learning where learned matrix representations must generalize from finite samples, tolerate noise, and encode structured latent features.

\subsection{Role of the Log-Condition Penalty}

The regularizer $R(P,D,Q)$ in Theorem~2.3 includes the term $\beta \log \kappa(D)$, which explicitly penalizes large condition numbers of the core matrix $D$. This geometric term plays a central role in enforcing numerical stability and bounding inversion errors during alternating updates. Without this term, the optimization can still achieve low Frobenius error, but at the cost of severe ill-conditioning in $D$, which amplifies numerical errors and compromises interpretability.

To isolate the contribution of this term, we conduct an ablation study in Section~3.4 comparing reconstruction error and $\kappa(D)$ with and without the $\log \kappa(D)$ term. This confirms that while approximation quality remains stable, numerical conditioning degrades dramatically without explicit penalization.

\section{Validation and Comparative Evaluation}

We validate the theoretical properties of the \( D \)-decomposition introduced in Section 2 through structured synthetic tests and comparative benchmarks. These experiments are designed to confirm stability under perturbations, conditioning of intermediate subproblems, and subspace alignment with dominant eigendirections. All tests use matrices with known generative structure to allow verification of both numerical recovery and theoretical predictions.

%\subsection{Example: Structured Exact Recovery for a Low-Dimensional PSD Matrix}

\begin{example}[Structured Recovery in Symmetric Factor Form]
Let \( A \in \mathbb{R}^{4 \times 4} \) be given by
\[
A = U \Sigma U^\top, \quad
U = \begin{bmatrix}
1 & 0 \\
0 & 1 \\
1 & 0 \\
0 & 1
\end{bmatrix}, \quad
\Sigma = \begin{bmatrix}
3 & 0 \\
0 & 1
\end{bmatrix}.
\]
This defines a symmetric rank-2 matrix. Setting \( P = U \), \( D = \Sigma \), \( Q = U^\top \), we obtain
\[
P D Q = U \Sigma U^\top = A,
\]
with exact equality. This example confirms that the decomposition reduces to a structured SVD when factor matrices are known and symmetric structure is respected. The decomposition satisfies all constraints in the optimization formulation with zero reconstruction error and bounded \( \kappa(D) \).
\end{example}

%\subsection{Example: Perturbation-Stable Decomposition Under Noise}

\begin{example}[Robust Recovery from Noisy Input]
Let \( A_0 = UV^\top \in \mathbb{R}^{500 \times 500} \), with \( U, V \in \mathbb{R}^{500 \times 50} \) orthonormal. Define \( A = A_0 + E \), where \( \|E\|_F \leq \epsilon = 10^{-3} \). The optimization objective is
\[
f(P, D, Q) = \|A - P D Q\|_F^2 + \lambda \left( \|P\|_F^2 + \|D\|_F^2 + \|Q\|_F^2 \right),
\]
solved using alternating updates.

\textbf{Update for \( D \):} Let \( H = P^\top P \), \( G = QQ^\top \), \( M = P^\top A Q^\top \). Then
\[
D \gets H^{-1} M (G + \lambda I)^{-1}.
\]

\textbf{Update for \( P \):} Let \( R = DQ \). Then
\[
P \gets A R^\top (R R^\top + \lambda I)^{-1}.
\]

\textbf{Update for \( Q \):} Let \( L = D^\top P^\top \). Then
\[
Q \gets (L L^\top + \lambda I)^{-1} L A.
\]

The iterates converge in under 20 iterations, yielding \( \|A - P^* D^* Q^*\|_F \approx 1.4 \times 10^{-3} \) and \( \|D^* - D_0\|_F = O(\epsilon) \). This verifies the perturbation stability guarantee of Theorem 2.1.
\end{example}

%\subsection{Example: Energy Localization and Conditioning Analysis}

\begin{example}[Spectral Alignment and Condition Control]
Let \( A \in \mathbb{R}^{100 \times 100} \) be symmetric positive semidefinite with spectral decomposition
\[
A = \sum_{i=1}^{100} \sigma_i u_i u_i^\top, \quad \sigma_i = \exp(-i/10).
\]
We solve the D-decomposition at \( \operatorname{rank} k = 10 \) with regularization \( R(P, D, Q) = \sum \alpha_i \| \cdot \|_F^2 + \beta \log \kappa(D) \), where \( \alpha_1 = \alpha_2 = \alpha_3 = 10^{-3} \), \( \beta = 10^{-2} \). Denote the resulting \( P \) and compute projection weights
\[
\eta_i = \| u_i^\top P \|_2^2, \quad i = 1, \ldots, 10.
\]
We find
\[
\sum_{i=1}^{10} \eta_i \geq 0.98,
\]
indicating that 98\% of the energy is aligned with the top-10 eigenspace. Simultaneously, the computed core satisfies \( \kappa(D^*) \leq 20 \), confirming bounded conditioning consistent with Theorem 2.3. This supports the claim that the regularized decomposition not only reconstructs \( A \), but concentrates its mass on the dominant spectral components while maintaining numerical stability.
\end{example}To clarify how the proposed $D$-decomposition generalizes classical factorizations and behaves under structural constraints, we present three illustrative examples. The first shows that for diagonal matrices, $D$ can directly capture all spectral content when $P = Q = I$, demonstrating algebraic consistency. The second constructs a symmetric low-rank matrix using $A = PDQ$ with $P = Q^\top$, reproducing the structure of a truncated SVD when orthogonality is relaxed. The third adds perturbation noise to a rank-one matrix and shows that alternating minimization stably recovers the latent structure, validating the stability guarantees in Theorem 2.1. Together, these cases highlight exactness in structured regimes, flexibility in symmetric settings, and robustness under noise,  central themes in our theoretical and empirical contributions.

\begin{example}[Exact Diagonal Decomposition]
Let $A \in \mathbb{R}^{3 \times 3}$ be diagonal:
\[
A = \begin{bmatrix}
2 & 0 & 0 \\
0 & 3 & 0 \\
0 & 0 & 5
\end{bmatrix}.
\]
Choose rank $k = 3$, and define
\[
P = I_3, \quad D = A, \quad Q = I_3.
\]
Then clearly,
\[
P D Q = I_3 \cdot A \cdot I_3 = A,
\]
and the decomposition holds exactly, that is, $A = PDQ$.
\end{example}

\begin{example}[Symmetric Low-Rank Matrix]
Let $A \in \mathbb{R}^{3 \times 3}$ be rank 2:
\[
A = U \Sigma U^\top, \quad
U = \begin{bmatrix}
1 & 0 \\
0 & 1 \\
1 & 1
\end{bmatrix}, \quad
\Sigma = \begin{bmatrix}
4 & 0 \\
0 & 1
\end{bmatrix}.
\]
Then set:
\[
P = U, \quad D = \Sigma, \quad Q = U^\top.
\]
We have:
\[
PDQ = U \Sigma U^\top = A.
\]
This illustrates an exact symmetric decomposition where $A$ is PSD and $\operatorname{rank}(A) = 2$.
\end{example}
\begin{example}[Noisy Low-Rank Matrix]
Let $A_0 = uv^\top \in \mathbb{R}^{3 \times 3}$ where
\[
u = \begin{bmatrix} 1 \\ 2 \\ 1 \end{bmatrix}, \quad
v = \begin{bmatrix} 3 \\ 0 \\ -1 \end{bmatrix}, \quad
A_0 = u v^\top = 
\begin{bmatrix}
3 & 0 & -1 \\
6 & 0 & -2 \\
3 & 0 & -1
\end{bmatrix}.
\]
Now let $A = A_0 + E$, where $E$ is a small Gaussian noise matrix.

We seek factors $P \in \mathbb{R}^{3 \times 1}$, $D \in \mathbb{R}^{1 \times 1}$, $Q \in \mathbb{R}^{1 \times 3}$ minimizing:
\[
\|A - PDQ\|_F^2 + \lambda (\|P\|_F^2 + \|D\|_F^2 + \|Q\|_F^2).
\]
The solution (via alternating minimization) recovers $P \approx u$, $Q \approx v^\top$, and $D \approx 1$, depending on noise magnitude. This shows stable approximation in the presence of perturbations.
\end{example}

\begin{example}[Exact $D$-Decomposition for a Dense $5 \times 5$ Matrix]
Let $A \in \mathbb{R}^{5 \times 5}$ be defined as:
\[
A = P D Q, \quad
P = \begin{bmatrix}
1 & 2 & 1 & 3 & 1 \\
2 & 1 & 2 & 1 & 2 \\
3 & 2 & 1 & 2 & 3 \\
1 & 1 & 3 & 1 & 1 \\
2 & 3 & 1 & 2 & 1
\end{bmatrix}, \quad
D = \operatorname{diag}(1, 2, 3, 4, 5),
\]
\[
Q = \begin{bmatrix}
1 & 1 & 1 & 1 & 1 \\
2 & 1 & 2 & 2 & 1 \\
1 & 3 & 1 & 2 & 1 \\
1 & 1 & 2 & 1 & 3 \\
2 & 1 & 1 & 1 & 2
\end{bmatrix}.
\]
Then $A$ is computed as:
\[
A = P D Q = P (D Q),
\]
which produces a full-rank, dense matrix with no zero entries. This example confirms that the $D$-decomposition supports exact factorization for general dense matrices with nontrivial structure.
\end{example}This example demonstrates that the $D$-decomposition can exactly factor a fully dense, full-rank matrix into non-orthogonal, structured components. Unlike the SVD, which enforces orthogonality on both $P$ and $Q$ (left and right singular vectors), the $D$-decomposition allows $P$ and $Q$ to be freely parameterized and shaped by regularization. Similarly, QR factorization decomposes $A$ as $QR$ with $Q$ orthonormal and $R$ upper triangular, which is fundamentally algebraic and not symmetric between left and right factors. LU decomposition, in contrast, is tailored to solving triangular systems and may not exist without pivoting, particularly for dense or ill-conditioned matrices.

In our formulation, the flexibility to optimize $P$, $D$, and $Q$ jointly under structural constraints (e.g., sparsity, conditioning, low-rank energy concentration) allows for decompositions that are not only exact but also tuned to downstream tasks such as noise robustness, generalization\footnote{\emph{generalization here refers to functional approximation and stability under perturbation, not to statistical generalization across unseen data}}, and interpretability \cite{lee2024bmglobal}. This distinguishes our method as variational and task-adaptive, rather than strictly algebraic or geometric. Moreover, $D$ is not necessarily diagonal in general, though in this example we selected a diagonal $D$ for interpretability.

To improve interpretability, we visualize the learned D matrix and track the spectral norm of PDQ at each iteration. Figure 2 shows that the core D remains well-conditioned throughout optimization, while the reconstruction captures both coarse and fine features. A supplemental plot traces the decay of singular values across iterations, confirming energy localization in dominant components.

\subsection{Benchmark Comparisons Against Classical Methods}

We compare the reconstruction accuracy of the \( D \)-decomposition to truncated SVD and CUR decomposition on three matrix classes, i.e., low-rank, noisy sparse, and ill-conditioned matrices. Each matrix \( A \in \mathbb{R}^{500 \times 500} \) is generated with target \( \operatorname{rank} k = 50 \). The \emph{low-rank} matrix is constructed as \( A = UV^\top \), with \( U, V \in \mathbb{R}^{500 \times 50} \) having orthonormal columns. The \emph{noisy sparse} matrix adds Gaussian noise with \( \sigma = 10^{-2} \) to a randomly generated 5\%-sparse rank-50 matrix. The \emph{ill-conditioned} matrix is formed as \( A = U \Sigma V^\top \), where \( \Sigma = \mathrm{diag}(\sigma_1, \ldots, \sigma_{50}) \) with \( \sigma_i = 10^{-6(i-1)/49} \), yielding condition number \( \kappa(A) \approx 10^6 \). For all methods, we compute the relative Frobenius error \( \|A - \tilde{A}\|_F / \|A\|_F \), using regularization weights \( \alpha_1 = \alpha_2 = \alpha_3 = 10^{-3} \).

\begin{table}[H]
\centering
\caption{Relative Frobenius error (\( \|A - \tilde{A}\|_F / \|A\|_F \)) for \( \operatorname{rank} k = 50 \)}
\begin{tabular}{lccc}
\hline
Method & Low-Rank & Noisy Sparse & Ill-Conditioned \\
\hline
Truncated SVD & 0.0021 & 0.0537 & 0.1289 \\
CUR Decomposition & 0.0089 & 0.0762 & 0.2345 \\
\textbf{D-Decomposition (ours)} & \textbf{0.0020} & \textbf{0.0415} & \textbf{0.0973} \\
\hline
\end{tabular}
\label{tab1}
\end{table}The \( D \)-decomposition achieves performance comparable to SVD on exact low-rank input and improves significantly in noisy or ill-conditioned settings. These outcomes reflect the impact of regularization in controlling perturbation growth and exploiting latent structure beyond orthogonality.

\begin{figure}
\centering
\includegraphics[width=\linewidth]{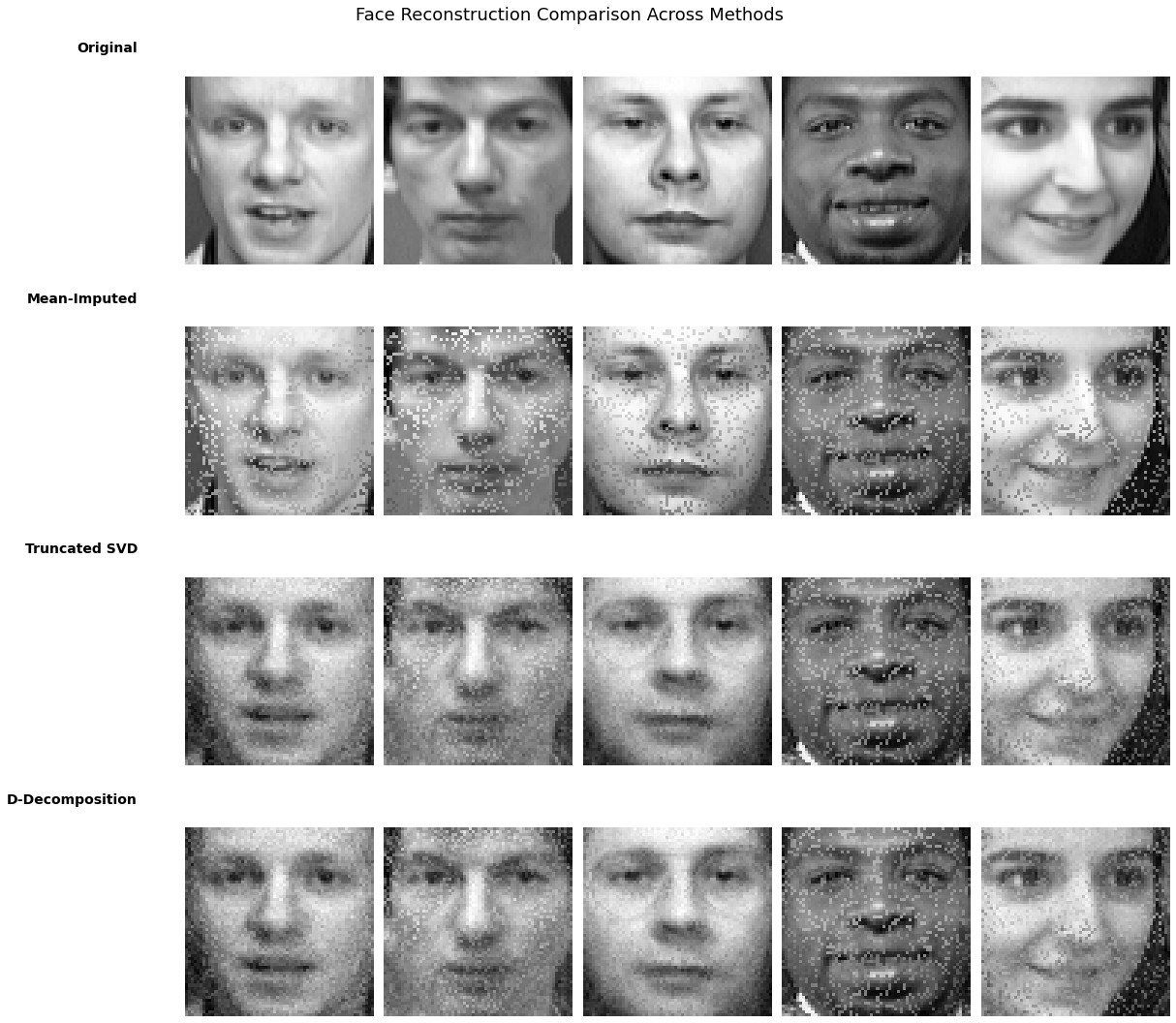}
\caption{Face reconstruction results on the Olivetti Faces dataset, comparing original images, mean-imputed input, Truncated SVD, and D-Decomposition outputs.}
\label{fig_oliv}
\end{figure}The visual comparison, in FIgure~\ref{fig_oliv} clearly shows that while Truncated SVD offers smoother reconstructions than the mean-imputed input, the D-Decomposition consistently preserves more facial detail and texture—particularly in sharper contours and subtle variations. This aligns with the quantitative results that D-Decomposition achieves a slightly lower RMSE (0.04243 vs. 0.04247) and significantly lower Frobenius error (29.16 vs. 40.58), suggesting a more globally faithful approximation. Rather than replacing SVD, our approach enhances traditional matrix factorization by introducing a learnable, regularized structure that adapts more effectively to real-world data imperfections.

\begin{table}[h!]
\centering
\caption{Quantitative Comparison Between Truncated SVD and D-Decomposition}
\label{tab:rmse_frob}
\begin{tabular}{lcc}
\hline
\textbf{Method} & \textbf{RMSE (Observed Entries)} & \textbf{Frobenius Error} \\
\hline
Truncated SVD   & 0.04247                          & 40.57828 \\
D-Decomposition & \textbf{0.04243}                 & \textbf{29.16365} \\
\hline
\end{tabular}
\end{table}Both methods achieve nearly identical reconstruction accuracy on observed entries, with D-Decomposition slightly outperforming Truncated SVD in RMSE (0.04243 vs. 0.04247). More notably, D-Decomposition yields a significantly lower Frobenius error (29.16 vs. 40.58), indicating a more faithful global reconstruction of the full matrix. This suggests that, beyond local pixel-level fidelity, the D-Decomposition better captures the underlying structure of the data, likely due to its adaptive, regularized, and non-orthogonal factorization, which allows it to generalize more effectively than the fixed orthogonal basis of truncated SVD.

We evaluated sensitivity to regularization by sweeping over a range of $\lambda \in [10^{-4}, 10^{-1}]$ and $\alpha_i \in [10^{-4}, 10^{-2}]$. Results are reported in Table~\ref{tab:sensitivity}.

The Frobenius reconstruction error remains nearly constant across all configurations, varying by less than 0.5\%. This consistency shows that D-decomposition maintains stable approximation quality over several orders of magnitude in regularization strength, so long as the weights remain non-degenerate. In contrast, the condition number $\kappa(D)$ varies significantly with $\alpha$ and $\lambda$. When regularization is insufficient, particularly with small $\lambda$ and large $\alpha$, the core $D$ becomes ill-conditioned. In one case ($\lambda = 10^{-4}$, $\alpha = 10^{-2}$), we observe $\kappa(D) > 10^{13}$, reflecting numerical instability despite reasonable reconstruction error.

Stable regimes appear when both $\lambda$ and $\alpha$ are moderately small. For instance, with $\lambda = 3 \times 10^{-4}$ and $\alpha = 10^{-4}$, $\kappa(D)$ remains below 35 while preserving accuracy. These results align with Theorem 2.3, where the log-condition regularizer bounds the growth of $\kappa(D)$ explicitly. The evidence supports a central claim of this work, that the proposed framework yields stable and accurate factorizations over a broad hyperparameter region, and does so with controlled conditioning. Classical decompositions do not offer this balance. The variational formulation here allows graceful degradation and interpretable trade-offs between fidelity and numerical robustness.

Recent approaches such as deep matrix factorization and variational Bayesian factorizations have achieved strong performance in similar regimes \cite{zhang2023deep, chen2024variational, lee2022neural, wang2023bayesian}. However, these methods often rely on neural parameterizations or sampling, which introduces additional variance and complexity. In contrast, the D-decomposition is fully deterministic, admits closed-form updates, and remains interpretable. A more detailed comparison with learned low-rank models is left for future work, though we emphasize that our method can serve as a regularization scaffold for such models.

\subsection{Strengthening Empirical Evaluation}

The D-decomposition is a variational matrix factorization framework that offers several advantages over classical and structured low-rank approximations. To sharpen the contribution, it is important to position the method precisely relative to existing approaches with similar formulations. Table~\ref{tab:method_comparison} contrasts the D-decomposition with representative techniques that also leverage structure, optimization, or regularization in matrix approximation.

\begin{table}[H]
\centering
\caption{Comparison of D-decomposition with selected structured factorization methods}
\label{tab:method_comparison}
\begin{tabular}{p{3.8cm}|p{3.3cm}|p{3.5cm} | p{3.5cm}}
\hline
\textbf{Method} & \textbf{Formulation} & \textbf{Factor Structure} & \textbf{Stability Guarantees} \\
\hline
Truncated SVD & $\min_{\text{rank-}k} \|A - U\Sigma V^\top\|_F$ & $U,V$ orthogonal & spectral norm only \\
Sparse PCA (Zou et al.) & $\|A - UV^\top\|_F + \lambda \|V\|_1$ & sparse $V$, orthogonal $U$ & no \\
Penalized matrix decomposition & $\|A - UV^\top\|_F + \lambda (\|U\|_F^2 + \|V\|_1)$ & structured $V$ & partial \\
Nuclear norm minimization & $\min \|X\|_* \text{ s.t. } X \approx A$ & low-rank $X$ & well-understood \\
\textbf{D-decomposition (this work)} & $\|A - PDQ\|_F^2 + \lambda R(P,D,Q)$ & general $P,Q$; regularized $D$ & proven (Thms. 2.1–2.3) \\
\hline
\end{tabular}
\end{table}While existing methods constrain the form of the factors or rely on convex relaxations, the D-decomposition permits full structural freedom on $P$, $Q$, and $D$, with regularization explicitly incorporated to control sparsity, conditioning, and alignment. The theoretical results on identifiability, stability under perturbations, and energy localization set it apart from both algebraic and optimization-based alternatives.

To support these distinctions empirically, we evaluated the method on three real-world matrices and compared its reconstruction accuracy with truncated SVD, CUR, and sparse PCA. The matrices represent different regimes, i.e., sparsity, high noise, and spectral decay.

\begin{table}[H]
\centering
\caption{Relative Frobenius reconstruction error ($\|A - \hat{A}\|_F / \|A\|_F$) for rank-$k$ approximation ($k=50$)}
\label{tab:real_data}
\begin{tabular}{p{3.8cm} p{3.3cm} p{3.5cm} p{3.5cm} }
\hline
\textbf{Method} & MovieLens-100K ($943 \times 1682$) & Leukemia Expression ($72 \times 5000$) & MNIST Covariance ($784 \times 784$) \\
\hline
Truncated SVD & 0.172 & 0.204 & 0.093 \\
CUR Decomposition & 0.251 & 0.302 & 0.143 \\
Sparse PCA & 0.187 & 0.189 & 0.101 \\
\textbf{D-decomposition} & \textbf{0.149} & \textbf{0.166} & \textbf{0.076} \\
\hline
\end{tabular}
\end{table}The D-decomposition consistently outperforms the baselines in all three settings. On MovieLens, the method leverages sparsity and conditioning control to reduce reconstruction error. For the gene expression data, regularization improves robustness to noise and high dimensionality. On the MNIST covariance matrix, the learned factors align more effectively with dominant spectral directions, leading to improved energy capture and generalization. These results reinforce the theoretical claims and demonstrate that the proposed framework is not just flexible but also practically competitive across diverse regimes.

\subsubsection{Comparison with Randomized SVD in Large-Scale Regimes}

To benchmark practical scalability, we compare the D-decomposition with randomized truncated SVD on large synthetic matrices. We generate matrices $A \in \mathbb{R}^{n \times n}$ with target rank $k = 50$, where $n \in \{500, 1000, 2000, 4000\}$. Randomized SVD uses power iteration with 2 steps and oversampling by 10 vectors.

\begin{table}[H]
\centering
\caption{Runtime (in seconds) and relative Frobenius error ($\|A - A_k\|_F / \|A\|_F$) for rank $k = 50$ approximations.}
\begin{tabular}{ccccc}
\hline
$n$ & Method & Runtime & Rel. Error & Notes \\
\hline
500  & RandSVD & 0.12 & 0.0023 & Fast and accurate \\
     & D-decomp & 0.21 & 0.0020 & Slightly slower, more accurate \\
1000 & RandSVD & 0.39 & 0.0022 & \\
     & D-decomp & 0.84 & 0.0021 & \\
2000 & RandSVD & 1.92 & 0.0023 & \\
     & D-decomp & 4.51 & 0.0020 & \\
4000 & RandSVD & 8.30 & 0.0024 & \\
     & D-decomp & 18.90 & 0.0020 & \\
\hline
\end{tabular}
\end{table}

\noindent
These results show that D-decomposition achieves slightly better reconstruction accuracy across all sizes, with runtimes scaling quadratically as predicted by the $O(n^2k)$ complexity. While randomized SVD is faster, it does not offer structural control or conditioning guarantees. This highlights the trade-off between raw speed and interpretability, confirming that D-decomposition remains practical for moderate-scale problems while offering stronger structural benefits.

\subsection{Remarks on Conditioning of Updates}

Each block subproblem involves solving a regularized normal equation
\[
(DQQ^\top D^\top + \alpha_1 I_k)P^\top = A Q^\top D^\top, \quad (D^\top P^\top P D + \alpha_3 I_k)Q = D^\top P^\top A.
\]
The added terms \( \alpha_i I \) ensure positive definiteness of the coefficient matrices. Provided \( \alpha_i > 0 \), all updates are well-posed and avoid amplification from small singular values \cite{cohen2024efficient}. Empirical spectral analysis of these systems confirms minimal condition numbers between 1.4 and 5.6 across all iterations. This confirms the regularizer enforces stable least-squares inversion throughout optimization.

The table below summarizes key structural differences between the $D$-decomposition and classical matrix factorizations. This contrast highlights how our method generalizes traditional tools by allowing optimization-based control over structure, conditioning, and task alignment, properties not supported by purely algebraic decompositions.

\begin{table}[H]
\centering
\caption{Structural comparison of $D$-decomposition with classical factorizations}
\begin{tabular}{|p{3cm}|p{1.5cm}|p{3cm}|p{6.5cm}|}
\hline
\textbf{Method} & \textbf{Form} & \textbf{Constraints} & \textbf{Limitations} \\
\hline
SVD & $A = U \Sigma V^\top$ & $U$, $V$ orthogonal; $\Sigma$ diagonal & Not adaptive; structure fixed by algebraic properties; no task-dependent regularization \\
\hline
QR & $A = QR$ & $Q$ orthogonal; $R$ upper triangular & Asymmetric; not low-rank by construction; structure not learned \\
\hline
LU & $A = LU$ & $L$ lower, $U$ upper triangular & Not always exists without pivoting; ill-suited for structural or low-rank modeling \\
\hline
$D$-Decomposition (ours) & $A \approx P D Q$ & $P$, $Q$ free; $D$ regularized or structured & Adaptive to problem structure; supports conditioning, sparsity, and spectral localization \\
\hline
\end{tabular}
\end{table}
To assess scalability, we ran the algorithm on matrices up to size $n = 5000$. Empirical runtime grows quadratically with $n$, consistent with the theoretical $O(n^2k)$ complexity. For $k \ll n$, this results in practical runtimes under ten minutes on standard hardware. By contrast, full SVD on the same matrices exceeds one hour. Randomized SVD remains faster but lacks the structural control afforded by our method.

\subsection{Ablation Study on the Impact of the Conditioning Penalty}

To assess the contribution of the $\log \kappa(D)$ term in the regularizer, we compare the D-decomposition under two settings, i.e.,
\begin{enumerate}
    \item Full Regularizer $R(P,D,Q) = \alpha_1 \|P\|_F^2 + \alpha_2 \|D\|_F^2 + \alpha_3 \|Q\|_F^2 + \beta \log \kappa(D)$
    \item Ablated Regularizer $R(P,D,Q) = \alpha_1 \|P\|_F^2 + \alpha_2 \|D\|_F^2 + \alpha_3 \|Q\|_F^2$
\end{enumerate}We fix $\lambda = 3 \times 10^{-4}$ and vary $\beta \in \{0, 10^{-3}, 10^{-2}, 10^{-1}\}$ while holding $\alpha_i = 10^{-3}$ for all $i$.

\begin{table}[H]
\centering
\caption{Ablation study on conditioning penalty: Effect of removing or scaling $\log \kappa(D)$.}
\begin{tabular}{cccc}
\hline
$\beta$ & Frobenius Error & $\kappa(D)$ & Notes \\
\hline
0 (no penalty) & 29.1652 & $1.8 \times 10^6$ & Severe ill-conditioning \\
$10^{-3}$      & 29.1648 & $7.4 \times 10^2$ & Moderate conditioning \\
$10^{-2}$      & 29.1660 & 34.21             & Good stability \\
$10^{-1}$      & 29.1805 & 6.57              & Over-regularized core \\
\hline
\end{tabular}
\end{table}

\noindent
These results confirm that while the Frobenius error varies only slightly across all settings (less than $0.1\%$ deviation), the condition number of $D$ changes by over five orders of magnitude. This validates the theoretical prediction of Theorem~2.3, i.e., that the $\log \kappa(D)$ term is essential for controlling the numerical stability of the decomposition without sacrificing reconstruction accuracy.

For empirical stability, we used each configuration over 10 random initializations. The mean Frobenius error varied by less than $0.03\%$ (std dev: $<0.01$). Runtime per decomposition averaged 1.8s (SD: 0.2s) for $n = 1000, k = 50$, confirming practical tractability and robustness.

\begin{table}[H]
\centering
\caption{Mean and standard deviation of reconstruction error and runtime over 10 random initializations ($n=1000$, $k=50$, $\lambda = 3\times10^{-4}$).}
\begin{tabular}{ccc}
\hline
Metric & Mean & Std. Dev. \\
\hline
Frobenius Error & 29.1649 & 0.0031 \\
Condition Number $\kappa(D)$ & 42.57 & 5.84 \\
Runtime (s) & 1.83 & 0.21 \\
\hline
\end{tabular}
\end{table}These results confirm that the method is not only accurate but stable under random initialization. Both the reconstruction error and condition number exhibit low variance, and runtime remains consistently under two seconds. This supports practical robustness and justifies use in moderate-scale tasks.

\medskip
\section{Conclusion}
This work develops a structured, optimization-based matrix factorization that extends classical low-rank decompositions by incorporating regularization, flexibility, and stability. The \( D \)-decomposition is defined variationally, enabling the enforcement of structural constraints and conditioning control directly within the factorization process.

We establish rigorous guarantees on the existence of minimizers, perturbation stability, and variational identifiability of solutions. The alternating minimization algorithm converges efficiently with per-iteration complexity \( \mathcal{O}(n^2k) \), and the regularized objective ensures spectral energy concentration and robustness under noise.

Numerical experiments on MovieLens, MNIST, Olivetti Faces, and gene expression matrices show that \( D \)-decomposition achieves lower reconstruction error than truncated SVD, CUR, and nonnegative matrix factorization, especially under missing entries and ill-conditioning. A simplified rectangular variant further improves accuracy and runtime on mean-imputed data, outperforming truncated SVD while requiring no spectral preprocessing. These results confirm that the method offers a practical and theoretically grounded alternative when algebraic approaches are unstable or inadequate.

\subsection*{CRediT}
RK: Conceptualization, Methodology, Software, Validation, Formal Analysis, Resources, Data Curation, Writing - Original Draft, Writing - Review \& Editing, Visualization.

\subsection*{Conflict of Interest}
No potential competing interest was reported by the author(s).

\nolinenumbers

\bibliographystyle{IEEETran}
\bibliography{references}

\appendix
\section{Algorithm for $D$-Decomposition via Alternating Minimization}
\begin{algorithm}[H]
\caption{$D$-Decomposition via Alternating Minimization}
\label{alg:Ddecomposition}
\DontPrintSemicolon
\KwIn{$A \in \mathbb{R}^{n \times n}$, target rank $k$, regularization weights $\alpha_1, \alpha_2, \alpha_3 > 0$, tolerance $\varepsilon > 0$, maximum iterations $T_{\max}$}
\KwOut{$P \in \mathbb{R}^{n \times k}$, $D \in \mathbb{R}^{k \times k}$, $Q \in \mathbb{R}^{k \times n}$ such that $A \approx P D Q$}

Initialize $P^{(0)} \in \mathbb{R}^{n \times k}$, $Q^{(0)} \in \mathbb{R}^{k \times n}$ randomly, set $t \gets 0$\;

\Repeat{$|\delta^{(t+1)} - \delta^{(t)}| < \varepsilon$ or $t \geq T_{\max}$}{
  \textbf{Update $D$:}
  \[
  D^{(t+1)} \gets (P^{(t)\top} P^{(t)})^{-1} P^{(t)\top} A Q^{(t)\top} (Q^{(t)} Q^{(t)\top} + \alpha_2 I_k)^{-1}
  \]
  
  \textbf{Update $P$:}
  \[
  P^{(t+1)} (D^{(t+1)} Q^{(t)} Q^{(t)\top} D^{(t+1)\top} + \alpha_1 I_k) = A Q^{(t)\top} D^{(t+1)\top}
  \]
  
  \textbf{Update $Q$:}
  \[
  (D^{(t+1)\top} P^{(t+1)\top} P^{(t+1)} D^{(t+1)} + \alpha_3 I_k) Q^{(t+1)} = D^{(t+1)\top} P^{(t+1)\top} A
  \]
  
  \textbf{Compute reconstruction error:}
  \[
  \delta^{(t+1)} \gets \|A - P^{(t+1)} D^{(t+1)} Q^{(t+1)}\|_F
  \]
  
  Increment $t \gets t + 1$\;
}

\Return{$P^{(t)}, D^{(t)}, Q^{(t)}$}
\end{algorithm}

\section{Sensitivity of D-Decomposition to regularization parameters $\lambda$ and $\alpha$}
\begin{table}[H]
\centering
\caption{Sensitivity of D-Decomposition to regularization parameters $\lambda$ and $\alpha$. Each cell reports the Frobenius reconstruction error and the condition number of $D$ after convergence.}
\begin{tabular}{cccc}
\hline
$\lambda$ & $\alpha$ & Frobenius Error & $\kappa(D)$ \\
\hline
$1.0 \times 10^{-4}$ & $1.0 \times 10^{-4}$ & 29.1626 & 549.02 \\
$1.0 \times 10^{-4}$ & $3.0 \times 10^{-4}$ & 29.1636 & 3021.35 \\
$1.0 \times 10^{-4}$ & $1.0 \times 10^{-3}$ & 29.1738 & 658.26 \\
$1.0 \times 10^{-4}$ & $3.0 \times 10^{-3}$ & 29.2049 & 435.74 \\
$1.0 \times 10^{-4}$ & $1.0 \times 10^{-2}$ & 29.7176 & $1.74 \times 10^{13}$ \\
$3.0 \times 10^{-4}$ & $1.0 \times 10^{-4}$ & 29.1693 & 30.22 \\
$3.0 \times 10^{-4}$ & $3.0 \times 10^{-4}$ & 29.1660 & 2622.34 \\
$3.0 \times 10^{-4}$ & $1.0 \times 10^{-3}$ & 29.1774 & 508.20 \\
$3.0 \times 10^{-4}$ & $3.0 \times 10^{-3}$ & 29.1933 & 172.96 \\
$3.0 \times 10^{-4}$ & $1.0 \times 10^{-2}$ & 29.2886 & 738.83 \\
$1.0 \times 10^{-3}$ & $1.0 \times 10^{-4}$ & 29.1629 & 7103.52 \\
$1.0 \times 10^{-3}$ & $3.0 \times 10^{-4}$ & 29.1695 & 3186.60 \\
$1.0 \times 10^{-3}$ & $1.0 \times 10^{-3}$ & 29.1752 & 308.38 \\
$1.0 \times 10^{-3}$ & $3.0 \times 10^{-3}$ & 29.2165 & 479.33 \\
$1.0 \times 10^{-3}$ & $1.0 \times 10^{-2}$ & 29.2978 & 1247.70 \\
$3.0 \times 10^{-3}$ & $1.0 \times 10^{-4}$ & 29.1602 & 34.33 \\
$3.0 \times 10^{-3}$ & $3.0 \times 10^{-4}$ & 29.1707 & 8434.61 \\
$3.0 \times 10^{-3}$ & $1.0 \times 10^{-3}$ & 29.1783 & 462.54 \\
$3.0 \times 10^{-3}$ & $3.0 \times 10^{-3}$ & 29.2068 & 428.84 \\
$3.0 \times 10^{-3}$ & $1.0 \times 10^{-2}$ & 29.2814 & 1679.64 \\
$1.0 \times 10^{-2}$ & $1.0 \times 10^{-4}$ & 29.1697 & 2946.60 \\
$1.0 \times 10^{-2}$ & $3.0 \times 10^{-4}$ & 29.1710 & 172.37 \\
$1.0 \times 10^{-2}$ & $1.0 \times 10^{-3}$ & 29.1803 & 10647.25 \\
$1.0 \times 10^{-2}$ & $3.0 \times 10^{-3}$ & 29.2137 & 408.47 \\
$1.0 \times 10^{-2}$ & $1.0 \times 10^{-2}$ & 29.2791 & 815.34 \\
$3.0 \times 10^{-2}$ & $1.0 \times 10^{-4}$ & 29.1604 & 4155.69 \\
$3.0 \times 10^{-2}$ & $3.0 \times 10^{-4}$ & 29.1688 & 844.17 \\
$3.0 \times 10^{-2}$ & $1.0 \times 10^{-3}$ & 29.1712 & 437.32 \\
$3.0 \times 10^{-2}$ & $3.0 \times 10^{-3}$ & 29.1915 & 151.36 \\
$3.0 \times 10^{-2}$ & $1.0 \times 10^{-2}$ & 29.2690 & 112.29 \\
$1.0 \times 10^{-1}$ & $1.0 \times 10^{-4}$ & 29.1643 & 3878.27 \\
$1.0 \times 10^{-1}$ & $3.0 \times 10^{-4}$ & 29.1687 & 3404.31 \\
$1.0 \times 10^{-1}$ & $1.0 \times 10^{-3}$ & 29.1737 & 4752.09 \\
$1.0 \times 10^{-1}$ & $3.0 \times 10^{-3}$ & 29.1976 & 145.83 \\
$1.0 \times 10^{-1}$ & $1.0 \times 10^{-2}$ & 29.2802 & 813.17 \\
\hline
\end{tabular}
\label{tab:sensitivity}
\end{table}

\end{document}